\newtheorem{thm}{Theorem}[section]
\newtheorem{Lemma}[thm]{Lemma}
\newtheorem{defn}[thm]{Definition}
\newtheorem{con}[thm]{Conjecture}
\title{{On minimal $k$-factor-critical planar graphs}\footnote{E-mail addresses: qlli@lzu.edu.cn (Q. Li), flianglu@163.com (F. Lu) and zhanghp@lzu.edu.cn (H. Zhang).
This work is supported by NSFC (Grant Nos. 12271235 and 12271229), Fujian Key Laboratory of Granular Computing and Applications (Minnan Normal University).}}
\author{Qiuli Li$^1$, Fuliang Lu$^2$\footnote{The
corresponding author.}, Heping Zhang$^1$ \\
\small{1. School of Mathematics and Statistics, Lanzhou University, Lanzhou, China} \newline \\ {\small 2. School of Mathematics and Statistics, Minnan Normal University, Zhangzhou,  China}}
\date{}
\begin{document}

\maketitle

\begin{abstract}
 A graph of order $n$ is said to be \emph{$k$-factor-critical}
 ($0\leq k <n$) if the removal of any $k$ vertices
 results in a graph with a perfect matching.
  A $k$-factor-critical graph $G$ is  \emph{minimal} if $G-e$ is not $k$-factor-critical for any edge $e$ in $G$.
 Favaron and  Shi  posed the conjecture that every minimal $k$-factor-critical graph  is of minimum degree $k+1$ in 1998. In this paper, we confirm the conjecture for planar graphs.  \vskip 0.3cm

\noindent \emph{Keywords:} Perfect matching; $k$-Factor-critical graph; Minimal $k$-factor-critical graph; Planar graph; Minimum degree.

\noindent{\em AMS 2000 subject classification:} 05C70, 05C90
\end{abstract}

\topmargin -8mm \textwidth 16cm \textheight 20cm \lineskip 0.2cm
\linespread{1.2}
 \section{Introduction}

All graphs considered in this paper are  simple, finite and undirected. We follow \cite{BM08} for undefined notation and terminologies.
 A \emph{perfect matching} $M$ of a graph is a set of edges such that each vertex is incident with exactly one edge of $M$.
  A \emph{factor-critical} graph is a graph
in which the removal of any vertex results in a graph with a perfect matching.  A graph with at least one edge is called \emph{bicritical} if, after  removing any pair of distinct vertices, the left graph has a perfect matching.
    Factor-critical graphs
and bicritical graphs, introduced by Gallai \cite{ga} and   Lov\'asz \cite{lo}, respectively, play a vital role in matching theory.
  As a natural generalization of  factor-critical graphs
and bicritical graphs,
  Favaron \cite{O.Favaron} and Yu \cite{yu} independently introduced $k$-factor-critical graphs:
 a graph $G$ of order $n$ is said to be \emph{$k$-factor-critical}  (\emph{$k$}-fc for short)  with $0\leq k < n$
if the removal of any $k$ vertices  results in a graph with a perfect matching. Obviously, if $G$ is a $k$-fc graph with $n$ vertices, then $k$ and $n$ are of the same parity. 
The following lemma is about the connectivity of a $k$-factor-critical graph.

\begin{Lemma}[\cite{O.Favaron}]\label{connectivity}
Let $G$ be a $k$-factor-critical graph with order $n$ and $1\leq k < n$. Then $G$ is $k$-connected and  $(k+1)$-edge-connected.
\end{Lemma}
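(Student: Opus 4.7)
The plan is to reduce both statements to a minimum-degree lower bound $\delta(G)\ge k+1$ together with a parity analysis of perfect matchings in graphs obtained by deleting carefully chosen $k$-sets.

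First I would establish the auxiliary bound $\delta(G)\ge k+1$. If some vertex $v$ had $\deg(v)\le k$, one could extend $N(v)$ (avoiding $v$ itself) to a $k$-set $S$; then $v$ is isolated in $G-S$, ruling out any perfect matching and contradicting $k$-factor-criticality. The required headroom $n\ge k+2$ comes from $n>k$ together with the parity condition $n\equiv k\pmod 2$.

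Next I would handle vertex $k$-connectivity. The key observation is that deleting any $(k-1)$-set $S$ leaves a factor-critical graph: for any remaining vertex $v$, the set $S\cup\{v\}$ has size $k$, so $G-S-v$ has a perfect matching by hypothesis. A standard parity argument (picking vertices from components of different parities and comparing Tutte-style deficiencies) shows every factor-critical graph is connected, so $G-S$ is connected whenever $|S|=k-1$. For smaller cut candidates $|S|<k-1$, I would enlarge $S$ to a $(k-1)$-set $S'$; since $\delta(G)\ge k+1$, each added vertex keeps at least one neighbor outside $S'$, so $G-S$ inherits connectivity from the connected $G-S'$.

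The main difficulty is the $(k+1)$-edge-connectivity, because Whitney's inequality combined with $\delta\ge k+1$ only yields $\lambda(G)\ge\kappa(G)\ge k$. To exclude $\lambda=k$, I would assume an edge cut $F$ with $|F|\le k$ separating $V(G)$ into $A$ and $B$, and let $A_F,B_F$ denote the endpoints of $F$ on each side. When $A=A_F$ (so $|A|\le k$), summing vertex degrees in $A$ forces $|F|\ge|A|(k+2-|A|)$, which exceeds $k$ on the range $1\le|A|\le k$, a contradiction. When $A_F\subsetneq A$, the set $A_F$ is a proper vertex cut, so vertex $k$-connectivity forces $|A_F|=|F|=k$; symmetrically $|B_F|=k$, and $F$ induces a perfect pairing $a_i\leftrightarrow b_i$. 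The crux is then a two-deletion comparison: removing $S_0=A_F$ forces $|A|-k$ to be even (each side of the resulting disconnection must carry a perfect matching), whereas removing $S_1=(A_F\setminus\{a_1\})\cup\{b_1\}$ destroys every cross-edge (the only candidate $a_1b_1$ is killed via $b_1$) and forces $|A|-k+1$ to be even. These two parities are incompatible, yielding the contradiction. I expect this exchange step---swapping $a_1$ for its $F$-partner $b_1$ in the deleted set---to be the most delicate move, since it is what converts low edge-connectivity into a parity obstruction.
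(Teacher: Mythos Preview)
The paper does not prove this lemma; it is quoted verbatim from Favaron's 1996 paper as a background fact, so there is nothing in the present paper to compare your argument against.

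That said, your proposal is a correct self-contained proof. The minimum-degree bound and the reduction of vertex $k$-connectivity to ``$G-S$ is factor-critical for every $(k-1)$-set $S$'' are standard and sound. In the extension step for $|S|<k-1$ it is slightly cleaner to argue the contrapositive (enlarge a putative small cut $S$ to a $(k-1)$-set $S'$ by adding vertices from one side of the separation, so that $G-S'$ remains disconnected), but your ``add back vertices'' version also works once you use $\delta\ge k+1$ to guarantee each reinstated vertex has a neighbour in $V(G)\setminus S'$. The edge-connectivity argument is the nontrivial part and your two-case split is right: the degree-sum inequality $|F|\ge |A|(k+2-|A|)$ handles $A=A_F$, and in the remaining case the matching structure of $F$ together with the $a_1\leftrightarrow b_1$ swap produces the parity contradiction $|A|-k\equiv 0$ and $|A|-k+1\equiv 0\pmod 2$. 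You should state explicitly that the case $B=B_F$ is disposed of by symmetry with Case~1 before invoking $|B_F|=k$, and note that $B\setminus\{b_1\}\neq\emptyset$ (which follows from $B_F\subsetneq B$), but these are presentational points rather than gaps.
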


  The degree of a vertex $v$ in a graph $G$, denoted by $d_G(v)$, is the number of edges of $G$ incident   with $v$.
   Denote by $\delta (G)$ the minimum degree of a graph  $G$.   A $k$-fc graph $G$ is called \emph{minimal} if the deletion of any edge in $G$ results in a subgraph which is not $k$-fc. 
  By Lemma \ref{connectivity}, a minimal $k$-fc graph is $k$-connected and $(k+1)$-edge-connected, which implies that $\delta(G)\geq k+1$.  Favaron and  Shi \cite{Shi} asked the question of whether every minimal $k$-fc graph has minimum degree $k + 1$;
Zhang et al. \cite{zh} formally reproposed the following conjecture.

\begin{con}[\cite{Shi,zh}]\label{Shi}
Let $G$ be a minimal $k$-factor-critical graph. Then $\delta(G)= k+1$.
\end{con}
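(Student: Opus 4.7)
I would argue by contradiction: assume $G$ is a minimal $k$-factor-critical graph with $\delta(G)\geq k+2$, and produce an edge $e$ with $G-e$ still $k$-fc. The plan is to extract, from the failure of $G-e$ to be $k$-fc, a structured ``witness'' set whose geometry clashes with the high minimum-degree hypothesis. Recast $k$-factor-criticality via Tutte--Berge: $G$ is $k$-fc iff $o(G-U)\leq |U|-k$ for every $U\subseteq V(G)$ with $|U|\geq k$, and parity forces any violation to satisfy $o(G-U)\geq |U|-k+2$. If $G-e$ fails to be $k$-fc for $e=xy$, there is a set $U_e$ with $o((G-e)-U_e)\geq |U_e|-k+2$. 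Since edge deletion can change the odd-component count only by $0$ or $+2$, one infers $x,y\notin U_e$, $o(G-U_e)=|U_e|-k$, and $e$ is a bridge inside some even component $C_e$ of $G-U_e$ that splits under $e$ into two odd components $A_e\ni x$ and $B_e\ni y$. A minimality pruning of $U_e$ should reduce to $|U_e|=k$, in which case $G-U_e$ has a perfect matching and $e$ belongs to every such matching.

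\textbf{High-degree bounds.} With $|U_e|=k$ and $\delta(G)\geq k+2$, the endpoint $x$ has all neighbours in $U_e\cup A_e\cup\{y\}$, so $|N(x)\cap A_e|\geq 1$; and every $v\in A_e\setminus\{x\}$ has all neighbours in $U_e\cup A_e$, so $|N(v)\cap A_e|\geq 2$. Hence $A_e$ and, symmetrically, $B_e$ induce subgraphs of odd order $\geq 3$ with internal minimum degree $\geq 2$, and each contains a cycle. In particular, $A_e$ carries at least one edge $e'$ not incident to $x$.

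\textbf{Producing a removable edge.} Choose $e$ so that $|A_e|$ is minimum over all edges and their witness triples, and take an edge $e'\subseteq A_e\setminus\{x\}$ as above; apply the witness construction to $e'$ to obtain a triple $(U_{e'},A_{e'},B_{e'})$. The target is the containment $A_{e'}\cup B_{e'}\subsetneq A_e$, which immediately contradicts the minimality of $|A_e|$. The argument here should combine Lemma~\ref{connectivity} --- the $k$-connectivity of $G$ forbidding $U_{e'}$ from separating a portion of $A_e$ from the rest of $G$ with fewer than $k$ vertices --- with the fact that $e$ is the \emph{unique} bridge of $C_e$ in $G-U_e$, which rules out $A_{e'}$ or $B_{e'}$ leaking out of $A_e$ through $x$ or through $U_e$.

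\textbf{Main obstacle.} The crux is proving the containment $A_{e'}\cup B_{e'}\subsetneq A_e$ in the last step. Witness sets are far from canonical and the two triples $(U_e,A_e,B_e)$ and $(U_{e'},A_{e'},B_{e'})$ can interact intricately --- $U_{e'}$ may meet $A_e$, $B_e$, and $U_e$ in complicated patterns. Controlling this interaction uniformly in $k$ appears to require either a bespoke ear-type decomposition of $k$-fc graphs extending Lov\'asz's for factor-critical graphs, or a quantitative Gallai--Edmonds analysis of $G-e$ relative to the tight witness $U_e$, neither of which is available off-the-shelf. This is presumably why the conjecture has resisted since $1998$ outside restricted classes, and why the present paper's analogue of Step~3 must proceed under the planarity assumption, exploiting topological/Euler-type restrictions to align witness triples that in the general case can drift apart.
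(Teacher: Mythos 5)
First, be aware that the statement you are proving is Conjecture~\ref{Shi} itself, which remains open in general; the paper does not prove it either, but only the planar case (Theorem~\ref{minimumdegree}, which reduces via $\delta(G)\leq 5$ for planar graphs to $k\leq 4$, with $k=3$ the only hard case). Your overall strategy --- extract a Tutte-type witness $(U_e,A_e,B_e)$ from the failure of $G-e$ to be $k$-fc, choose $e$ with $|A_e|$ minimum, locate an edge $e'$ inside $A_e\setminus\{x\}$ using the degree hypothesis, and try to produce a strictly smaller witness --- is genuinely parallel to the paper's argument for $k=3$ (there, $X$ is a $4$-cut separating a smallest odd component $O$ and the edge $uv$ is taken inside $O$). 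But your sketch has two genuine gaps, and planarity is used in the paper exactly where your argument has nothing.

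The first gap is the ``minimality pruning'' to $|U_e|=k$: this is unjustified and in general not achievable. The failure of $G-e$ to be $k$-fc gives a set $S'$ of size $k$ with $G-e-S'$ having no perfect matching; Tutte's theorem then produces a further set $S''$ with $o(G-e-S'-S'')\geq |S''|+2$, and the violating set is $U_e=S'\cup S''$ with $|S''|$ not under your control (for instance $G-e-S'$ can be connected, of even order, and still have no perfect matching, so $U_e=S'$ alone is not a violation). Everything downstream of that claim --- ``$G-U_e$ has a perfect matching and $e$ belongs to every such matching,'' and the clean description $N(x)\subseteq U_e\cup A_e\cup\{y\}$ with $|U_e|=k$ --- inherits the gap. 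The paper instead works with $S'\cup S''$ of arbitrary size and uses the bipartite planar edge bound (Lemma~\ref{bipartiteontheplane}) to force every odd component to have exactly four neighbours; this is the content of Property~$P$ and Lemma~\ref{propertyP}, and it has no substitute in your sketch. The second gap is the one you name yourself: the containment $A_{e'}\cup B_{e'}\subsetneq A_e$ is asserted, not proved. In the paper the corresponding step is the whole of Theorem~\ref{3fc}: the two cuts $X$ and $X_u$ are refined into nine pieces, parity arguments and the extremal choice of $O$ eliminate most configurations, and the surviving configuration is destroyed by exhibiting a $K_{3,3}$ minor, contradicting planarity via Theorem~\ref{planar}. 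Both appeals to planarity (the edge count and the forbidden minor) are essential and unreplaced, so what you have is a programme for the open conjecture rather than a proof.
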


Regarding the ear decomposition of factor-critical graphs, it is clear that the  minimum
degree of a minimal factor-critical graph is 2, which confirms the conjecture for $k=1$ \cite{Shi}. This is also a motivation that Favaron and  Shi posed the question (it can be checked that the  minimum
degree of a minimal 0-factor-critical graph is 1).  Favaron and  Shi \cite{Shi} confirmed this conjecture for three large $k$'s with $k\in\{n-6, n-4, n-2\}$.   Guo et al. \cite{Guo2,Guo3,Guo4} confirmed this conjecture for  $k\in\{2, n-10, n-8\}$, and also for claw-free graphs \cite{Guo1}. A graph is called \emph{planar} if it can be drawn on the plane such that its edges only intersect at their end points. Such a drawing is
called a \emph{plane} graph. We confirm the conjecture to be true for planar graphs in this paper.

\begin{thm}\label{minimumdegree}
Let $G$ be a minimal $k$-factor-critical planar graph. Then $\delta(G)=k+1$.
\end{thm}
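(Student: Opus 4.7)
My plan is to argue by contradiction: suppose $G$ is a minimal $k$-factor-critical planar graph on $n$ vertices with $\delta(G)\geq k+2$, and aim for a contradiction (which, together with Lemma~\ref{connectivity}, pins down $\delta(G)=k+1$). The first observation is that planarity already handles all but a few values of $k$. Since $G$ is simple and planar, Euler's formula gives $2|E(G)|\leq 6n-12$, while the degree hypothesis gives $2|E(G)|\geq (k+2)n$. Combining these yields $(4-k)n\geq 12$, so $k\leq 3$. The small cases are essentially already in hand: a minimal $0$-fc graph is a perfect matching (so $\delta=1$); the ear decomposition of factor-critical graphs gives $\delta=2$ for $k=1$ \cite{Shi}; and $k=2$ was settled in full generality by Guo et al.\ \cite{Guo2}. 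Hence the entire content of the theorem reduces to ruling out a minimal $3$-factor-critical planar graph with $\delta\geq 5$.

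\textbf{Local analysis at a degree-$5$ vertex.} For $k=3$ and $\delta(G)\geq 5$, planarity forces $\delta(G)=5$, and Euler's formula furnishes at least twelve vertices of degree exactly $5$ (the relation $\sum_{v}(6-d(v))\geq 12$ leaves no other option when $d(v)\geq 5$ everywhere). Fix such a vertex $v$ with neighbours $u_1,\dots,u_5$ listed in the cyclic order given by the planar embedding. For each $i$, the minimality of $G$ produces a $3$-set $S_i\subseteq V(G)\setminus\{v,u_i\}$ such that $G-S_i$ has a perfect matching and every such matching uses the edge $vu_i$ (equivalently, $G-S_i-vu_i$ has no perfect matching). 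Applying Tutte's theorem to $G-S_i-vu_i$ attaches to each $S_i$ a barrier whose odd-component deficit is repaired precisely by the edge $vu_i$, giving strong control over the neighbourhood structure of $v$.

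\textbf{Deriving the contradiction and the main obstacle.} The plan is then to play the five barriers $S_1,\dots,S_5$ off against one another. A natural first split is according to $|S_i\cap N(v)|\in\{0,1,2,3\}$; the $3$-connectivity and $4$-edge-connectivity supplied by Lemma~\ref{connectivity} sharply constrain each case. The planar embedding further restricts the interaction of barriers corresponding to edges $vu_i,vu_j$ that bound a common face. My hope is that combining two well-chosen barriers $S_i,S_j$ yields a separator of $G-v$ of size at most three whose removal violates either the $3$-connectivity of $G$ or the parity requirement imposed by Tutte's theorem on the number of odd components. The principal difficulty is exactly this merging step: matching-theoretic minimality only guarantees the abstract existence of each $S_i$ and says little about its global placement, whereas planarity gives only the cyclic ordering of the neighbours of $v$. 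Bridging these two ingredients -- most likely via a careful Gallai--Edmonds analysis of each $G-S_i$, perhaps supplemented by a discharging argument that propagates local $5$-vertex deficits through the incident faces -- is where the real work will lie.
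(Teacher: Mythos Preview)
Your reduction to $k=3$ is correct and matches the paper; the divergence, and the genuine gap, lies entirely in the treatment of $k=3$ with $\delta(G)\geq5$. The paper does not work locally at a degree-$5$ vertex. Its key step (Lemma~\ref{propertyP}) shows that for \emph{every} edge $uv$ there is a $4$-vertex cut $X_u\ni v$ separating an odd component $G_u\ni u$ with $E(V(G_u),\{v\})=\{uv\}$. The proof is exactly the stronger use of planarity your sketch lacks: from the Tutte set $S'\cup S''$ attached to $uv$, contract each odd component of $G-uv-S'-S''$ to a point, obtaining a bipartite planar graph; the bound $|E|\le 2|V|-4$ (Lemma~\ref{bipartiteontheplane}) is then forced to hold with equality and every odd component has \emph{exactly} four neighbours, upgrading the abstract barrier to an honest $4$-cut. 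The paper then chooses a $4$-cut $X$ separating an odd component $O$ of minimum order, applies the lemma to an edge inside $O$ to produce a second $4$-cut $X_u$, and uncrosses $X$ against $X_u$ via a case analysis on $|X\cap X_u|$; each case either contradicts the minimality of $|V(O)|$ or, in the residual subcase, yields a $K_{3,3}$ minor.

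In your plan the $3$-set $S_i$ is not itself a cut; what separates is the full barrier $S_i\cup S_i''$, whose size you never bound. Without the bipartite-planar counting step above there is no reason that combining two such barriers produces a separator of size at most three, so neither $3$-connectivity nor a parity obstruction is triggered; the cyclic order of $N(v)$ in the embedding is far too weak a planarity input for this purpose. You have accurately diagnosed the obstacle (``matching-theoretic minimality only guarantees the abstract existence of each $S_i$\ldots''), but the cure is precisely Lemma~\ref{propertyP}; and once one has it, the global minimal-odd-component uncrossing is both cleaner than, and independent of, any particular degree-$5$ vertex, whereas your proposed Gallai--Edmonds/discharging route has no visible path to the same conclusion.
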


We mainly pay attention to Conjecture \ref{Shi} for planar graphs. Aside this conjecture, 
a problem that one may be interested in is to determine the number of vertices with minimum degree in minimal $k$-fc graphs. It should be noted that a 3-connected 2-fc graph is called a {\em brick}, which is  a building block of matching covered graphs; see \cite{plummer} for details.
Norine and Thomas \cite{nt} conjectured that every minimal brick (minimal 3-connected 2-fc graph) $G$ has at least $\alpha |V (G)|$ ($\alpha >0$) vertices of degree 3, where a brick $G$ is \emph{minimal} if $G-e$ is not a brick for any edge $e$ in $G$. Along this line, what can we say about the number of vertices with degree 4 in  minimal 4-connected 3-fc graphs?
Thomas and Yu \cite{ty} showed that  every graph which can be obtained from a 4-connected planar graph by deleting two vertices is Hamiltonian. Then every 4-connected planar graph of odd order is 3-fc.
 Therefore, if $G$ is a 4-connected minimal 3-fc planar graph, then $G-e$ is not $4$-connected for any edge $e$ in $G$. That is to say, a 4-connected minimal 3-fc planar graph is minimal 4-connected, where  a 4-connected graph is {\em minimal} if the deletion
of each edge results in a non-4-connected graph.
For a minimal $k$-connected graph $G$, Mader showed that $\delta(G) = k$ \cite{ma}, and
further obtained that $G$ has at least $\frac{(k-1)|V(G)|+2}{
2k-1}$ vertices of degree $k$ \cite{ma2}. By Mader's result, every 4-connected minimal 3-fc planar graph has at least $\frac{3|V(G)|+2}{
7}$ vertices of degree $4$.

\section{Preliminaries}

We begin with some notation. Let $G$ be a graph with vertex set $V(G)$ and edge set $E(G)$.  For $X,Y\subseteq V(G)$, by $E_G(X,Y)$  we mean the set of edges of $G$ with one end point in $X$ and the other end point in $Y$; by $G[X]$ we mean the subgraph of $G$ induced by $X$.  Denote by $N_G(X)$, or simply $N_G(u)$ when $X=\{u\}$,  the set of all the vertices in $\overline{X}=V(G)\setminus X$ adjacent to some vertex in $X$. Let $H$ be a subgraph of $G$. We usually simply write $N_G(V(H))$ as $N_G(H)$. Let  $|N_G(H)|=n_G(H)$. If no confuse occurs, the subscript will be omitted.
Assume that $X$ is a vertex cut, that is, $G-X$ has more components than $G$. We say
 $X$  \emph{separates} $H'$ if  $ H'$ is a  component or a union of  components  of $G-X$ such that $V(G)\setminus (X\cup  V(H'))\neq \emptyset$.

We first present several results related to planar graphs, such as upper bound of the number of edges in a bipartite planar graph, upper bound of the minimum degree of a planar graph, and the classic characterization theorem for planar graphs. As we will see, these will play crucial roles in our proof.

We  recall the well-known \emph{Euler's
Formula} on plane graphs, that is,
if we denote $n,m$ and $f$  the numbers of vertices, edges and faces of a
connected plane graph respectively, then $n-m+f=2$. By considering the relationships between the sum of  degrees of vertices (faces) and the number of edges, one can easily show that any planar graph has a vertex of small degree, see Corollary 10.22 in \cite{BM08} for example.

\begin{Lemma}\label{degree5}
Let $G$ be a planar graph. Then $\delta(G)\leq 5$.
\end{Lemma}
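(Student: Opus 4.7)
The plan is to derive the minimum degree bound directly from Euler's Formula together with a face/edge double count, as in the standard textbook argument cited from \cite{BM08}. First I would reduce to the case where $G$ is connected and has $n\geq 3$ vertices: if $n\leq 2$ there is nothing to prove, and if $G$ is disconnected then any component with at least one vertex already has $\delta(G)\leq \delta(G_0)$, where $G_0$ is the component realizing the minimum, so we may work with the component and fix a plane embedding.

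Next I would exploit the fact that, in a simple plane graph with $n\geq 3$, every face is bounded by a closed walk of length at least $3$. Counting incidences between edges and faces (each edge lies on the boundary of at most two faces) gives $2m \geq 3f$, hence $f \leq \tfrac{2m}{3}$. Substituting this into Euler's Formula $n-m+f=2$ yields the classical inequality
\[
m \leq 3n - 6.
\]

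Finally, I would translate the edge bound into a degree bound via the handshake lemma. Summing degrees over all vertices gives
\[
\sum_{v\in V(G)} d_G(v) \;=\; 2m \;\leq\; 6n - 12 \;<\; 6n,
\]
so the average degree is strictly less than $6$, and therefore some vertex $v$ satisfies $d_G(v)\leq 5$. This gives $\delta(G)\leq 5$ as required.

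I do not expect any serious obstacle: the only mildly delicate step is ensuring the face-length lower bound $3$ holds, which requires $G$ to be simple with $n\geq 3$ (so that no face is bounded by a digon or a loop), but both conditions are either given (simplicity is assumed throughout the paper) or can be arranged by the trivial reduction in the first step.
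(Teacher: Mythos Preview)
Your argument is correct and is precisely the standard proof the paper has in mind: the paper does not give its own proof of this lemma but simply invokes Euler's Formula together with the degree-sum relations and cites Corollary~10.22 in \cite{BM08}, which is exactly the computation you outline. There is nothing substantively different between your approach and the paper's (implicit) one.
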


The next lemma gives an estimation of the number of edges in a
bipartite planar graph, which is also obtained by the Euler's
formula of graphs on  plane graphs and the property that the
length of each face in a bipartite plane graph is at least four.
\begin{Lemma}\label{bipartiteontheplane}
Let $G$ be a bipartite planar graph  with $n$ vertices
and $m$ edges.
 Then $m\leq 2n-4$.
\end{Lemma}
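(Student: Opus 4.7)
The plan is to invoke Euler's formula (recalled just above) together with the observation that, in a plane bipartite graph, every face has length at least four.

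First I would reduce to the case where $G$ is connected. If $G$ has several components, I can iteratively add an edge between two different components, chosen so that its endpoints lie in opposite bipartite classes and so that the edge is drawn through the outer face of a fixed plane embedding. Each such addition preserves bipartiteness and planarity, increases $m$ by one, and decreases the number of components by one. Hence it suffices to establish $m\leq 2n-4$ for connected graphs (the tiny cases $n\leq 2$, where the bound is trivial or vacuous, being checked directly).

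Now fix a plane drawing of a connected bipartite $G$ and let $f$ denote its number of faces. Every face boundary walk has length at least $3$ because $G$ is simple; and since $G$ is bipartite it contains no odd closed walk, so no face boundary walk has length $3$. Thus every face has length at least $4$. Each edge lies on exactly two face boundary walks (a bridge contributing twice to the walk of a single face), so
\[
2m \;=\; \sum_{F}\ell(F) \;\geq\; 4f,
\]
and therefore $f\leq m/2$. Substituting into Euler's formula $n-m+f=2$ gives
\[
2 \;=\; n-m+f \;\leq\; n-m+\tfrac{m}{2} \;=\; n-\tfrac{m}{2},
\]
which rearranges to $m\leq 2n-4$, as required.

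I do not anticipate any real obstacle: the argument is standard. The only points that need care are the reduction to the connected case (so that Euler's formula applies in the form $n-m+f=2$) and the verification that, even in the presence of bridges or when the outer face is counted, every face boundary walk in a bipartite plane graph has length at least four. Both points follow from elementary properties of face boundary walks.
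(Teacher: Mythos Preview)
Your argument is correct and is exactly the standard one the paper has in mind: Euler's formula together with the fact that every face of a bipartite plane graph has length at least four. One small caveat: the inequality $m\le 2n-4$ actually fails for $n\le 2$ (e.g.\ a single edge), so rather than calling those cases ``trivial or vacuous'' you should simply note that the lemma tacitly assumes $n\ge 3$, which is how it is applied in the paper.
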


Let $H$ be a graph. A graph $T$ is called
an $MH$ if its vertex set admits a partition $\{V_{x} | x \in V (H) \}$ such that  $T[V_{x}]$ is connected, and distinct vertices $x, y \in V(H)$ are adjacent in $H$
if and only if $E_T(V_{x},V_{y})\neq \emptyset$. 
If a graph $G$ contains an $MH$ as a subgraph, then $H$ is a \emph{minor} of $G$.
The following classic theorem, named Kuratowski's theorem, gives a characterization of planar graphs.

\begin{thm}[\cite{Kuratowski, Wagner}]\label{planar}
A graph $G$ is planar if and only if it contains neither $K_{5}$ nor $K_{3,3}$ as a minor.
\end{thm}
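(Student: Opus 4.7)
The plan is to reduce Theorem~\ref{minimumdegree} to the single remaining case $k=3$ and then derive a contradiction there via the planar tools of Section~2. For the reduction, combining Lemma~\ref{connectivity} with Lemma~\ref{degree5} gives $k+1\le\delta(G)\le 5$, so $k\le 4$; when $k=4$ these bounds pinch to $\delta(G)=5=k+1$, and when $k\in\{0,1,2\}$ the conjecture is already known (as summarized in the introduction). Only $k=3$ is new, so it suffices to prove that no minimal $3$-fc planar graph has $\delta(G)=5$.

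Assume for contradiction that $G$ is such a graph, and fix a vertex $v$ of degree $5$ with neighbors $u_1,\dots,u_5$. By minimality, for each $i\in\{1,\dots,5\}$ there is a set $S_i\subseteq V(G)\setminus\{v,u_i\}$ of size $3$ such that every perfect matching of $G-S_i$ contains $vu_i$. I would then apply Tutte's $1$-factor theorem to both $G-S_i$ and $G-S_i-vu_i$; a short parity argument should produce, for each $i$, a Tutte barrier $U_i$ of $G-S_i$ such that $v$ and $u_i$ lie in a common even component $C_i$ of $(G-S_i)-U_i$, and such that $vu_i$ is a bridge of $C_i$ whose removal yields two odd components. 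A corollary is that any neighbor $u_j$ with $j\ne i$ and $u_j\notin S_i\cup U_i$ is trapped on the $v$-side of this bridge, so $S_i\cup U_i$ is a small vertex set whose removal in $G-v$ separates $u_i$ from every such $u_j$.

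The remaining step is to package the five pairs $(S_i,U_i)$ into a planarity violation. My main route is to exploit the $3$-connectivity of $G$ (Lemma~\ref{connectivity}) together with the five separation statements: I aim to select three indices whose associated cuts decompose $G-v$ into pairwise vertex-disjoint connected pieces which, together with $v$ and a suitable labelling of the remaining $u_j$'s, realize the branch sets of a $K_{3,3}$ or a $K_5$ minor, and then contradict Theorem~\ref{planar}. In degenerate configurations in which the $U_i$ overlap too much to supply disjoint pieces, my backup is to form an auxiliary bipartite graph whose one part is $N(v)$ (or a carefully chosen subset) and whose other part encodes the odd components appearing across the five decompositions, with edges supplied by the forcing data, and then contradict the bound $m\le 2n-4$ of Lemma~\ref{bipartiteontheplane}. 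The hardest step, where the case analysis will concentrate, is controlling how the five barriers and separators interact with $N(v)$, especially when the $S_i$ do not lie inside $N(v)$: there one must upgrade mere ``separates'' into the vertex-disjoint connectivity needed either to name a branch set for the forbidden minor or to supply an edge of the auxiliary bipartite graph, and I expect the rigidity forced by requiring all five edges $vu_i$ to be simultaneously unremovable, combined with $d(v)=5$ and $\delta(G)\ge 5$, to be what ultimately collides with planarity.
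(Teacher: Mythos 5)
Your proposal does not address the statement you were asked to prove. The statement is Theorem~\ref{planar}, the Kuratowski--Wagner characterization of planar graphs (a graph is planar if and only if it has neither $K_5$ nor $K_{3,3}$ as a minor). The paper offers no proof of this: it is a classical result imported by citation to Kuratowski (1930) and Wagner (1937), and any proof of it would have to proceed along entirely different lines --- e.g.\ reducing to minor-minimal non-planar graphs, establishing their $3$-connectivity, and then constructing a planar embedding of any $3$-connected graph with no $K_5$ or $K_{3,3}$ minor. Nothing in your sketch engages with that task.

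What you have written instead is an outline of a proof of Theorem~\ref{minimumdegree}, the paper's main result, and inside that outline you explicitly \emph{invoke} Theorem~\ref{planar} (``realize the branch sets of a $K_{3,3}$ or a $K_5$ minor, and then contradict Theorem~\ref{planar}''). Relative to the assigned statement this is circular: you are assuming the theorem you were supposed to prove. If the intended target really were Theorem~\ref{minimumdegree}, your reduction to $k=3$ matches the paper, but your $k=3$ argument (working locally at a degree-$5$ vertex $v$ and its five neighbours, extracting five barriers $U_i$) diverges from the paper's actual route, which instead chooses a $4$-vertex cut separating a smallest odd component $O$, applies Lemma~\ref{propertyP} to an edge of $O$ to obtain a second $4$-cut with Property $P$, and runs a case analysis on the overlap parameter $c=|X\cap X_u|$; your five-barrier packaging step is left entirely unexecuted and is exactly where the difficulty lies. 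But the decisive defect is the mismatch of target: as a proof of Theorem~\ref{planar}, the proposal contains no relevant content.
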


We second present several useful properties on matchings.
The first one is the famous Tutte's Perfect Matching (1-Factor)  Theorem.  We say a component is \emph{odd} (or \emph{even}) if it is of odd (or even) order. For $S\subseteq V(G)$, let us denote by
 $o(G-S)$ the number of odd components of $G-S$.

\begin{thm}[\cite{tut}] \label{Tutte}
A graph $G$ has a perfect matching if and only if $o(G-S)\leq |S|$ for any $S\subseteq V(G)$.
\end{thm}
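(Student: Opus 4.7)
The plan is to prove necessity by a short matching-counting argument, and sufficiency by the classical edge-maximal counterexample technique of Lov\'asz. For necessity, if $G$ has a perfect matching $M$ and $S \subseteq V(G)$, then each odd component $C$ of $G-S$ has odd order, so $M$ cannot match all of $V(C)$ internally and must pair some vertex of $C$ with a vertex of $S$; distinct odd components use distinct partners, so $o(G-S) \leq |S|$.

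For sufficiency I argue by contradiction. Taking $S = \emptyset$ in the hypothesis forces $n := |V(G)|$ to be even. A short check shows that adding an edge to $G$ cannot destroy Tutte's condition: the only effect on $G-S$ is possibly to merge two of its components when both endpoints lie outside $S$, which changes $o(G-S)$ by $0$ or $-2$. So I pass to an edge-maximal supergraph $G^*$ on $V(G)$ that still satisfies Tutte's condition but has no PM; since $K_n$ has a PM, $G^*$ is not complete. Let $S := \{v \in V(G^*) : d_{G^*}(v) = n-1\}$. The heart of the argument is the structural claim that every component of $G^* - S$ is a clique. Granting the claim, set $r := o(G^*-S) \leq |S| =: s$; match one vertex of each odd component to a distinct vertex of $S$ (possible since every vertex of $S$ is universal), use an internal PM on the remaining even-order portion of each component, and use an internal PM on the remaining $s-r$ vertices of $S$ (note $s-r$ is even, since $n$ is even and the sum of the orders of the components of $G^*-S$ has the same parity as $r$). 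This yields a PM of $G^*$, a contradiction.

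To prove the claim, suppose some component of $G^* - S$ is not a clique. Then there exist vertices $x, z, y \in V(G^*) \setminus S$ with $xz, zy \in E(G^*)$ but $xy \notin E(G^*)$, and since $z \notin S$ there exists $w$ with $zw \notin E(G^*)$. By edge-maximality, $G^* + xy$ has a perfect matching $M_1 \ni xy$ and $G^* + zw$ has a perfect matching $M_2 \ni zw$. The symmetric difference $M_1 \triangle M_2$ is a vertex-disjoint union of even alternating cycles. If $xy$ and $zw$ lie in distinct cycles of $M_1 \triangle M_2$, swap on each cycle independently to delete both unwanted edges, producing a PM of $G^*$. If they share a cycle $C'$, split $C'$ at $z$ and at the edge $xy$ into two arcs; take the $M_1$-edges along one arc and the $M_2$-edges along the other (which simultaneously omits both $xy$ and $zw$), then close up using the shortcut edge $zy \in E(G^*)$. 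The resulting matching of $V(C')$ lies entirely in $E(G^*)$ and extends to a PM of $G^*$, again a contradiction.

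The main obstacle is the same-cycle subcase of the structural claim: one must verify that $x, y, z, w$ occupy admissible positions along $C'$ so that the rerouting through $zy$ genuinely produces a valid perfect matching avoiding both $xy$ and $zw$. This requires tracking which edges of $C'$ belong to $M_1$ versus $M_2$ on each arc, and ruling out degenerate positions such as $xy$ being incident with $z$ or $w$; these are precluded by the choices $z \notin \{x,y\}$ and $w \notin N_{G^*}(z) \supseteq \{x,y\}$. Everything else reduces to straightforward parity bookkeeping.
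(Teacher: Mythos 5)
Your proof is correct, but there is nothing in the paper to compare it against: the statement is Tutte's 1-factor theorem, which the paper invokes as a classical result with only a citation to Tutte's 1947 article and no proof of its own. What you have written is the standard Lov\'asz-style argument via an edge-maximal counterexample. Both directions check out: the necessity argument (each odd component of $G-S$ must send a matching edge into $S$, and these edges use distinct vertices of $S$) is complete; the observation that adding an edge can only merge two components of $G-S$ and hence lowers $o(G-S)$ by $0$ or $2$ correctly justifies passing to a maximal $G^*$; the parity bookkeeping for $s-r$ is right ($n$ even and $\sum_i |C_i| \equiv r \pmod 2$ give $s \equiv r \pmod 2$); and in the structural claim the key non-degeneracy facts hold as you state them, namely $z \notin \{x,y\}$ because $xy \notin E(G^*)$ while $xz, zy \in E(G^*)$, and $w \notin \{x,y\}$ because $zw \notin E(G^*)$. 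The one step you flag as delicate --- the same-cycle subcase --- does go through: deleting $xy$ and $zw$ from the alternating cycle $C'$ leaves two paths, one joining $z$ to one of $\{x,y\}$ and the other joining $w$ to the remaining one; taking the $M_2$-edges on the path at $z$ and the $M_1$-edges on the path at $w$ leaves exactly $z$ and that remaining vertex unmatched, and they are joined by an edge of $G^*$ since $z$ is adjacent to both $x$ and $y$. So your proof is a valid, self-contained derivation of a result the paper treats as a black box.
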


 The following result can be directly obtained by the definition of 3-factor-criticality. As it will be used  time and time again, we state it as a lemma.
 \begin{Lemma}\label{3cuteven}
Let $G$ be a 3-factor-critical graph and $S\subseteq V(G)$. If $|S|=3$, then every component of $G-S$ is even.
\end{Lemma}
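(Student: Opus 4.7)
The plan is to derive the statement directly from the definition of $3$-factor-criticality together with a trivial observation about perfect matchings and components. Since $|S|=3$ and $G$ is $3$-factor-critical, the graph $G-S$ admits a perfect matching $M$ by definition. This is the only ingredient from the hypothesis I intend to use.

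Next I would argue as follows. Each edge of $M$ has both of its endpoints in the same component of $G-S$, because connected components are unions of vertices linked by edges and $M\subseteq E(G-S)$. Hence for every component $C$ of $G-S$, the set $M_C:=\{e\in M : e\subseteq V(C)\}$ is a perfect matching of $C$: every vertex of $C$ is covered exactly once by $M$, and such a covering edge must lie in $M_C$. Consequently $|V(C)|=2|M_C|$ is even, which is what we wanted.

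Since the argument above is essentially a one-liner, I do not anticipate any real obstacle; the remark in the paper that the lemma ``can be directly obtained by the definition of $3$-factor-criticality'' is precisely this observation. The only mild subtlety worth recording is a parity check: by the general fact that a $k$-factor-critical graph on $n$ vertices satisfies $n\equiv k\pmod 2$, we have $|V(G)|$ odd, so $|V(G-S)|=|V(G)|-3$ is even, which is consistent with (but not needed for) the component-wise conclusion. I would therefore present the proof in two short sentences and move on.
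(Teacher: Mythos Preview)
Your proof is correct and is exactly the argument the paper has in mind: since $|S|=3$, the definition gives a perfect matching of $G-S$, and a perfect matching restricts to each component, forcing every component to have even order. The paper does not spell this out beyond remarking that it follows directly from the definition, so your write-up is entirely in line with the intended approach.
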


 At the end of this section, we give the following result that will play an important role  in the sequel.
 \begin{Lemma}\label{ngeq4}
Let $G$ be a 3-factor-critical graph of order at least 4 and $H$ be a subgraph of $G$. If $|V(H)|$ is odd and $N(H)$ is a vertex cut, then $n(H)\geq 4$.
\end{Lemma}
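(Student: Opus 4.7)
The plan is to argue by contradiction, using Lemma \ref{3cuteven} together with a parity argument on components of $G-N(H)$. Since $G$ is $3$-factor-critical, Lemma \ref{connectivity} tells us $G$ is $3$-connected, so any vertex cut of $G$ has at least $3$ vertices; in particular $|N(H)|\ge 3$. It therefore suffices to rule out the case $|N(H)|=3$. I would set $S:=N(H)$ and assume, toward a contradiction, that $|S|=3$. By Lemma \ref{3cuteven}, every component of $G-S$ has even order.

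The key auxiliary step is to show that $V(H)$ is exactly a union of some components of $G-S$. By the definition of $N(H)$, any neighbor in $G$ of a vertex of $V(H)$ that lies outside $V(H)$ must belong to $N(H)=S$; equivalently, in the graph $G-S$ no vertex of $V(H)$ has a neighbor outside $V(H)$. Thus, if $C$ is a component of $G-S$ meeting $V(H)$, walking along a path in $C$ from a vertex of $V(H)$ to a hypothetical vertex of $V(C)\setminus V(H)$ would at some step take an edge from $V(H)$ into $V(G)\setminus(V(H)\cup S)$, contradicting the previous observation. Hence $V(C)\subseteq V(H)$ for every such $C$, and $V(H)$ decomposes as a disjoint union of components of $G-S$.

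Combining the two facts, $|V(H)|$ is a sum of even numbers and therefore even, contradicting the hypothesis that $|V(H)|$ is odd. This contradiction forces $|N(H)|\ge 4$, as required. I do not expect a genuine obstacle here; the argument is a clean parity consequence of Tutte's theorem packaged through Lemma \ref{3cuteven}, and the only structural point requiring verification is the claim that $V(H)$ is a union of components of $G-S$, which follows directly from how $N(H)$ is defined.
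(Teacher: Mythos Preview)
Your argument is correct and follows the same route as the paper's proof: reduce to $|N(H)|=3$ via $3$-connectedness, then invoke Lemma~\ref{3cuteven} to force $|V(H)|$ even. The paper compresses your auxiliary step into a single sentence (``By Lemma~\ref{3cuteven}, $|V(H)|$ is even''), tacitly using that $V(H)$ is a union of components of $G-N(H)$; your explicit justification of this point is a welcome clarification but not a different idea.
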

\begin{proof}
 Suppose, to the contrary, that $n(H)\leq 3$. Since $G$ is 3-fc and $|V(G)|\geq 4$, $G$ is of odd order and further  3-connected  by Lemma \ref{connectivity}. As $N(H)$ is a vertex cut, $n(H)=3$ by the 3-connectedness of $G$. By Lemma \ref{3cuteven}, $|V(H)|$ is even, a contradiction. Therefore, $n(H)\geq 4$.
\end{proof}

\section{Proof of Theorem \ref{minimumdegree}}

 We first prove that Conjecture \ref{Shi} is true for minimal 3-fc planar graphs, that is, Theorem \ref{minimumdegree} holds for $k=3$. It is relatively easy to handle for the other $k$'s.

\begin{defn}Let $uv\in E(G)$.
We say  a vertex cut $X_{u}(uv)\subseteq V(G)$ \emph{(}resp.  $X_{v}(vu)\subseteq V(G))$ satisfies  Property $P$ if

{\rm{(i)}} $|X_{u}(uv)|=4$ \emph{(}resp.  $|X_{v}(vu)|=4)$;

{\rm{(ii)}} $v\in X_{u}(uv)$ \emph{(}resp. $u\in X_{v}(vu))$;

{\rm{(iii)}} $u$ lies in an odd component, say $G_{u}$, of $G-X_{u}(uv)$ $(v$ lies in an odd component, say $G_{v}$, of $G-X_{v}(vu))$; and

{\rm{(iv)}} $E(V(G_u), \{v\})=\{uv\}$ \emph{(}resp. $E(V(G_v), \{u\})=\{uv\})$.\\
If no confuse
occurs, one can simply write $X_u$ for $X_u(uv)$ \emph{(}resp. $X_{v}$ for $X_{v}(vu))$.
\end{defn}

\begin{Lemma} \label{propertyP}
Let $G$ be a minimal 3-factor-critical planar graph. If $\delta(G)\geq 5$, then for any edge $e=uv\in E(G)$, there exists a vertex cut $X_{u}$  and a vertex cut $X_{v}$ of $G$ satisfying Property $P$.
\end{Lemma}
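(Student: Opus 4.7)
The plan is to exploit the minimality of $G$ as $3$-factor-critical to produce, via Tutte's theorem and parity, an odd-component structure yielding the desired $4$-cuts $X_u$ and $X_v$. Since $G$ is minimal $3$-fc, $G-e$ is not $3$-fc, so there exists a $3$-set $S\subseteq V(G)$ with $G-e-S$ having no perfect matching. First I would observe $u,v\notin S$: if $v\in S$, then $G-e-S=G-S$ has a perfect matching by the $3$-fc of $G$, a contradiction. By Tutte's theorem there is $T\subseteq V(G)\setminus S$ with $o(G-e-S-T)>|T|$, and since $|V(G)|$ is odd and $|S|=3$, parity upgrades this to $o(G-e-S-T)\geq|T|+2$.

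Since $G$ is $3$-fc, $o(G-S-T)\leq|T|$; and removing the single edge $e$ can increase the odd-component count by at most $2$, with equality only when both endpoints of $e$ lie in a common even component of $G-S-T$ that $e$ splits into two odd halves. Comparing bounds forces $o(G-S-T)=|T|$ and singles out an even component $C$ of $G-S-T$ containing both $u$ and $v$ with $C-e=G_u\sqcup G_v$, where $u\in G_u$ and $v\in G_v$ are both odd. In particular $N_G(G_u)\subseteq S\cup T\cup\{v\}$, $N_G(G_v)\subseteq S\cup T\cup\{u\}$, and the only edge from $G_u$ to $v$ (resp.\ from $G_v$ to $u$) is the bridge $e$.

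The crux of the argument is then to choose the witness $(S,T)$ so that $|T|=0$. My plan is to minimise $|T|$ over all valid witnesses and argue $|T|>0$ is impossible. If $|T|\geq 1$, pick any $t\in T$; by minimality of $|T|$, reinsertion of $t$ must drop $o$ strictly below $|T|+1$, which forces $t$ to be adjacent to at least three of the $|T|+2$ odd components $O_1,\dots,O_{|T|},G_u,G_v$ of $G-e-S-T$. Combining this with $\delta(G)\geq 5$ and the sparsity bound for bipartite planar graphs (Lemma~\ref{bipartiteontheplane}) applied to a suitable contracted bipartite subgraph on $T$ versus the odd components, or alternatively by directly exhibiting a $3$-set $S'$ (obtained from $S$ by swapping with some $t\in T$) that isolates $G_u$ in $G-e-S'$ and hence witnesses $o(G-e-S')\geq 2$, should force $|T|=0$. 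This reduction is the main obstacle of the proof.

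Once $|T|=0$, set $X_u:=N_G(G_u)$ and $X_v:=N_G(G_v)$. The containment $N_G(G_u)\subseteq S\cup\{v\}$ gives $|X_u|\leq 4$, and Lemma~\ref{ngeq4} gives $|X_u|\geq 4$, so $|X_u|=4$. Property $P$ now follows: $v\in X_u$ since $uv\in E(G)$ and $u\in G_u$; $u$ lies in the odd component $G_u$ of $G-X_u$; and $E(V(G_u),\{v\})=\{uv\}$ by the bridge property of $e$. To see $X_u$ is a genuine vertex cut, note $|G_v|\geq 2$ (else $\deg_G(v)\leq|\{u\}\cup S|=4$, contradicting $\delta(G)\geq 5$), so $V(G)\setminus(G_u\cup X_u)\supseteq V(G_v)\setminus\{v\}\neq\emptyset$. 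The construction of $X_v$ is entirely symmetric.
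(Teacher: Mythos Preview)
Your setup through the parity calculation is correct and matches the paper exactly: you correctly obtain $o(G-e-S-T)=|T|+2$, and that $u,v$ lie in distinct odd components $G_u,G_v$ of $G-e-S-T$ with $E(V(G_u),\{v\})=\{uv\}$.

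The gap is your reduction to $|T|=0$. You acknowledge this is ``the main obstacle'' and sketch two possible routes, but neither is carried out, and in fact neither seems to go through as stated. The minimality argument only yields that each $t\in T$ touches at least three odd components; combining this with the bipartite planar edge bound gives no contradiction with $|T|\geq 1$ (the inequalities are compatible). The swapping idea is vague and does not obviously decrease $|T|$ while preserving $|S|=3$.

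The point is that forcing $|T|=0$ is unnecessary. The paper instead applies the bipartite planar bound (Lemma~\ref{bipartiteontheplane}) directly to the contracted bipartite planar graph $H$ on parts $S\cup T$ and the $|T|+2$ contracted odd components. Since every odd $G_i$ has $n_G(G_i)\geq 4$ by Lemma~\ref{ngeq4}, the contracted vertices (other than $u',v'$) have degree $\geq 4$ in $H$, while $u',v'$ have degree $\geq 3$; summing gives $|E(H)|\geq 4|T|+6$. On the other hand $|E(H)|\leq 2(|S\cup T|+|T|+2)-4=4|T|+6$. Equality then forces $d_H(u')=d_H(v')=3$, i.e.\ $n_G(G_u)=n_G(G_v)=4$ (counting the edge $uv$). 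So $X_u:=N_G(G_u)$ and $X_v:=N_G(G_v)$ already have size exactly $4$, and your final paragraph applies verbatim without ever needing $|T|=0$. In short, the bipartite planar bound you invoke is the right tool, but it should be aimed at pinning down $|N_G(G_u)|$ directly rather than at eliminating $T$.
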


\begin{proof}
Since $G$ is minimal 3-fc, $G-e$ is not 3-fc. By the definition of a 3-fc graph, there exists an $S'\subseteq
V(G-e)$ of size $3$ satisfying that $G-e- S'$ has no perfect
matchings. Set $G'=G-e- S'$. Then, by Theorem \ref{Tutte},
$G'$ contains an $S''\subseteq V(G')$ such that $
o(G'-S'')\geq|S''|+1$.
  Moreover, since $G'$ is of even order ($|V(G')|=|V(G)|-|S'|=|V(G)|-3$),
  $|S''|$ and $o(G'-S'')$ have the same parity, and it follows that $o(G'-S'')\geq |S''|+2$. On the other hand, since $G$ is 3-fc, $G'+e=G-S'$ has a perfect matching. By Theorem \ref{Tutte}, for $S''$, $o(G'+e-S'')\leq |S''|$ holds. As adding  the edge $e=uv$ to $G'-S''$ can only destroy at most two odd components of it, we have $o(G'+e-S'')\geq o(G'-S'')-2$. Combining the above inequalities, we have $|S''|\geq o(G'+e-S'')\geq o(G'-S'')-2\geq |S''|$. It follows that $G'-S''$ has exactly $|S''|+2$ odd components (that is $o(G'-S'')=|S''|+2$),  denoted by $G_{i}$ ($1\leq i
  \leq |S''|+2$),  and $u,v$ lie in two different $G_{i}$'s.  For convenience,  we denote the components containing $u$ and $v$  by $G_{u}$ and $G_{v}$ respectively.

  Now we are to show that each $G_{i}$ ($1\leq i
  \leq |S''|+2$) is connected to at least 4 vertices in $\overline{V(G_{i})}$, that is, $n_G(G_{i})\geq 4$.   If  $|S''|=0$, then $G-e- S'-S''$ has exactly two odd components: $G_{u}$ and $G_{v}$. Since $\delta(G)\geq 5$, $d_{G}(u)\geq 5$ and  $d_{G}(v)\geq 5$. As   $|S'\cup S''|=|S'|+|S''|=3$ and   $E_G(V(G_{u}), V(G_{v}))=\{uv\}$, we have that $|V(G_{u})|\geq 2$ and $|V(G_{v})|\geq 2$ (indeed, $|V(G_{u})|\geq 3$ and $|V(G_{v})|\geq 3$ as both $G_{u}$ and $G_{v}$ are odd components). It implies that $\overline{V(G_{i})\cup N_G(G_{i})}\neq \emptyset$.  If  $|S''|\geq 1$, then there are at least $|S''|+2\geq 3$ odd components in $G-e- S'-S''$; this means that for each $i$, $\overline{V(G_{i})\cup N_G(G_{i})}\neq \emptyset$.
   By the above argument, we conclude  that $N_G(G_{i})$ is a vertex cut.  Therefore,  $n_G(G_{i})\geq 4$ by Lemma \ref{ngeq4}.

\begin{figure}[htbp]
\begin{center}
\includegraphics[totalheight=4.5 cm]{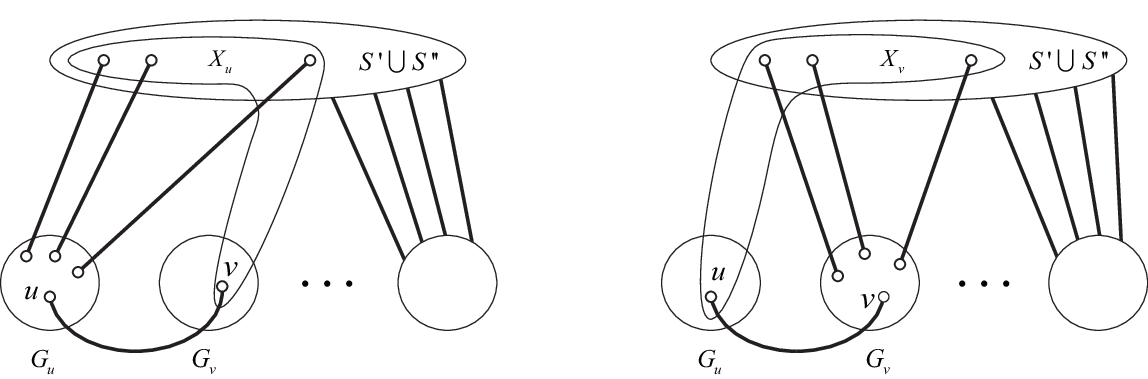}
\caption{\label{5}\small{Illustration for the proof of  Lemma \ref{propertyP}.}}
\end{center}
\end{figure}

We may assume that $G$ is a plane graph. In $G-e=G-uv$, we delete the even components of $G'-S''$, contract each component $G_{i}$ ($1\leq i
\leq |S''|+2$) into a singleton,
  and delete the edges in $G[S'\cup S'']$ and the multiple edges produced by the contraction. Denote the
  resulting  graph by $H$.  We can easily  see that  $H$ is a plane graph too. Let $B$  consist of the singletons gotten by contracting each component  $G_{i}$ ($1\leq i\leq |S''|+2$). Then $H$ is  a bipartite graph with partite sets $S'\cup S''$ and  $B$. Denote by $u'$ and $v'$ the vertices of  $H$ gotten by contracting $G_{u}$ and $G_{v}$
  into  singletons respectively. Then $\{u', v'\}\subseteq B$.
  Recalling that $n_G(G_{i})\geq 4$, we have  $d_H(x)\geq 4$ for $x\in B\setminus\{ u',v'\}$; $d_H(u')\geq 3$, $d_H(v')\geq 3$ as $uv\in E(G)$.
   We now estimate $|E(H)|$ in two different ways. On one hand, we count the number of edges sending out from $B$ and obtain that $|E(H)|\geq 4(|B|-2)+3\times 2=4(o(G'-S'')-2)+3\times 2=4|S''|+6$. On the other hand, by Lemma \ref{bipartiteontheplane}, $|E(H)|\leq 2(|S'\cup S''|+|B|)-4= 2(|S'|+|S''|+|S''|+2)-4=4|S''|+6$  (as $|S'|=3$). Therefore, $|E(H)|=4|S''|+6$ and further,  $d_H(x)= 4$ for $x\in B\setminus \{ u',v'\}$; $d_H(u')=d_H(v')= 3$. Then, in $G$,
   each  $G_{i}$ has exactly 4 neighbors in $\overline{V(G_{i})}$ for $1\leq i
  \leq |S''|+2$. In particular,  $n_G(G_{u})=4$ and $n_G(G_{v})=4$ (recalling $uv\in E(G)$).
  By the above argument,   $N_G(G_{u})$ is our desired $X_{u}$ and $N_G(G_{v})$ is our desired $X_{v}$, see Figure \ref{5} for illustration. The lemma follows.
   \end{proof}


 \begin{thm}\label{3fc}
Let $G$ be a minimal 3-factor-critical planar graph. Then $\delta(G)=4$.
\end{thm}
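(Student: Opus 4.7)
The plan is a proof by contradiction. Since $G$ is a minimal $3$-factor-critical graph, Lemma \ref{connectivity} gives $\delta(G)\ge 4$, so it suffices to rule out $\delta(G)\ge 5$. Under this assumption Lemma \ref{degree5} yields $\delta(G)=5$. Fix any edge $uv\in E(G)$ and invoke Lemma \ref{propertyP} to obtain a $4$-vertex cut $X_u=\{v,a,b,c\}$ whose removal leaves an odd component $G_u\ni u$ with $n_G(G_u)=4$ and $E(V(G_u),\{v\})=\{uv\}$. Every vertex of $V(G_u)$ then has its $G$-neighbourhood inside $V(G_u)\cup X_u$, and $u$ is the only vertex of $V(G_u)$ adjacent to $v$. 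Combining $d_G(u)\ge 5$ with $|X_u|=4$ and the odd parity of $|V(G_u)|$ immediately gives $|V(G_u)|\ge 3$.

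The heart of the argument is the case $|V(G_u)|=3$. Writing $V(G_u)=\{u,x,y\}$, the identity
\[
2|E(G[V(G_u)])|+|E(V(G_u),X_u)|=\sum_{w\in V(G_u)}d_G(w)\ge 15,
\]
together with the bipartite planarity bound of Lemma \ref{bipartiteontheplane} applied to the bipartite subgraph between $V(G_u)$ and $X_u$ and the fact that $v$ contributes only one edge, forces $G[V(G_u)]$ to be a triangle and both $x,y$ to be adjacent to each of $a,b,c$; the remaining degree budget then pins down the adjacencies between $u$ and $\{a,b,c\}$, up to one missing edge which a short case split shows must be of the form $ua$. Either one produces a $K_{3,3}$ directly on $\{u,x,y\}\cup\{a,b,c\}$, or realizes the missing edge $ua$ as a minor through a re-routing path (whose existence follows from the $3$-connectivity of $G$ and Lemma \ref{3cuteven} applied to the $3$-set $\{u,x,y\}$), or else applies Lemma \ref{propertyP} to the symmetric cut $X_v$ and contracts $V(G_v)$ to a single branch vertex; in every case we exhibit a $K_{3,3}$ or $K_5$ minor, contradicting Theorem \ref{planar}.

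For $|V(G_u)|\ge 5$, I would choose the original edge $uv$ so that $|V(G_u)|$ is minimum over all possible applications of Lemma \ref{propertyP}. Selecting an appropriate edge $ww'$ with $w\in V(G_u)\setminus\{u\}$ and applying Lemma \ref{propertyP} to it produces a new odd separated component, and exploiting the rigid quadrangulation-like structure uncovered in the proof of Lemma \ref{propertyP} (where the auxiliary bipartite graph satisfies $|E|=2|V|-4$) allows one to show this component lies inside $V(G_u)\cup X_u$ and is strictly smaller than $G_u$, contradicting minimality. This last step, controlling the interaction of the two cuts and verifying the strict decrease, is the main obstacle I expect to encounter.
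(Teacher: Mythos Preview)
Your overall architecture---assume $\delta(G)\ge 5$, invoke Lemma~\ref{propertyP}, take a \emph{smallest} odd piece behind a $4$-cut, then apply Lemma~\ref{propertyP} again to an interior edge and study how the two cuts interact---is exactly the paper's strategy. But two choices in your plan diverge from the paper in ways that matter.

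First, you minimise $|V(G_u)|$ only over cuts \emph{produced by Lemma~\ref{propertyP}}. The paper instead minimises over \emph{all} $4$-vertex cuts that isolate an odd component. This is not cosmetic: when you intersect the minimal cut $X$ with the Property-$P$ cut $X_u$ and run the case analysis, the ``smaller odd piece'' you discover is typically cut off by a set such as $S_1\cup\{u\}$ or $(S_1\cup\{u\})\setminus\{v\}$---a bare $4$-cut with no Property-$P$ structure. If your minimum is taken only over Property-$P$ cuts, these do not contradict minimality, and your reduction stalls.

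Second, and more seriously, the reduction you hope for in the $|V(G_u)|\ge 5$ step does not always succeed. The paper's proof is a four-way case split on $c=|X\cap X_u|\in\{0,1,2,3\}$, and in the case $c=2$ (with $|X_1|=|X_2|=1$, $|X_u^1|=2$, $|X_u^2|=0$) one genuinely cannot locate a strictly smaller odd component behind any $4$-cut. Instead one is forced to exhibit a $K_{3,3}$ minor from the rigid configuration (and even then a further sub-case requires passing to a second edge $wz$ and repeating the whole analysis). So the sentence ``allows one to show this component \ldots is strictly smaller than $G_u$'' is precisely where the argument can fail; you should expect a terminal minor-finding sub-case, not just a clean descent.

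Your separate treatment of $|V(G_u)|=3$ is not in the paper (the paper's case analysis is uniform in $|V(O)|$), and while the local degree count you outline is correct---$G[V(G_u)]$ is a triangle, $x,y$ see all of $\{a,b,c\}$, and $u$ misses exactly one of them---the minor construction you sketch is not complete: with $ua\notin E(G)$ you still need a $u$--$a$ path avoiding $\{x,y,b,c\}$, and this requires analysing the components of $G-X_u$ on the other side (using $3$-connectivity and Lemma~\ref{3cuteven} on their $3$-neighbourhoods). It can be made to work, but it is not as short as you suggest, and it does not replace the hard case analysis for larger $|V(G_u)|$.
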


\begin{proof} Since $G$ is  3-fc, $G$ is 4-edge-connected by Lemma \ref{connectivity}, implying that $\delta(G)\geq 4$. To prove that $\delta(G)=4$, we only need to show that $\delta(G)\leq 4$. Suppose to the contrary that $\delta(G)\geq 5$. Then by Lemma \ref{propertyP}, for any edge $e=u'v'\in E(G)$, there exist two  4-vertex cuts (vertex cuts of size 4) $X_{u'}$  and  $X_{v'}$ of $G$ both satisfying Property $P$.  Since $G$ is of odd order (as $G$ is 3-fc), each 4-vertex cut separates at least one odd component. Let $X$ be a 4-vertex cut which separates an odd component of smallest order among all 4-vertex cuts of $G$ and we denote such one odd component by $O$.
Since $\delta(G)\geq 5$, $|V(O)|\neq 1$, and further $E(O)\neq \emptyset$.  Let   $uv\in E(O)$. By Lemma \ref{propertyP}, we have a 4-vertex cut $X_{u}$ satisfying Property $P$. Now we have two different 4-vertex cuts $X$ and $X_{u}$  as $v\in X_{u}$ but $v\notin X$.  So we have two different partitions of $V(G)$ regarding $X$ and $X_{u}$  respectively:   $\{V(O), X, \overline{V(O)\cup X}\}$ and   $\{ V(G_{u}), X_{u}, \overline{V(G_{u})\cup  X_{u}}\}$, where $G_u$ is the odd component of $G-X_u$ containing $u$.
  Note that  both $|\overline{V(O)\cup X}|$ and $|\overline{V(G_{u})\cup  X_{u}}|$  are even, as both $|V(O)|$ and $|V(G_{u})|$ are odd, $|X_{u}|=|X|=4$ and $|V(G)|$ is odd.


We refine the two partitions each other as the following possible nine subsets (see Figure \ref{fig3} (left)): $V_{1}=V(G_{u})\cap V(O)$, $V_{2}=V(G_{u})\cap \overline{V(O)\cup X}$, $V_{3}=\overline{V(G_{u}) \cup X_{u}} \cap \overline{V(O)\cup X}$, $V_{4}=\overline{V(G_{u}) \cup X_{u}} \cap V(O)$, $X_{1}=X\cap V(G_{u})$, $X_{2}=X\cap \overline{V(G_{u}) \cup X_{u}}$, $X_{u}^1=X_{u}\cap V(O)$, $X_{u}^2=X_{u}\cap \overline{V(O)\cup X}$ and $C=X\cap X_{u}$. As $v\in X_{u}$ and $v\in V(O)$, one can see that $v\in X_{u}^1$. As $u\in V(G_{u})$ and $u\in V(O)$, we have $u\in V_{1}$.  Let $x_{1}=|X_{1}|$, $x_{2}=|X_{2}|$, $y_{1}=|X_{u}^1|$, $y_{2}=|X_{u}^2|$ and $c=|C|$. Please see Figure \ref{fig3} (right) for illustration. Then $|X|=x_{1}+x_{2}+c=|X_{u}|=y_{1}+y_{2}+c=4$, and
\begin{equation}\label{3.1}
x_{1}+x_{2}+y_{1}+y_{2}+2c=8.
\end{equation}

 For convenience, let $S_{1}=X_{1}\cup C \cup X_{u}^1$, $S_{2}=X_{1}\cup C \cup X_{u}^2$, $S_{3}=X_{u}^2\cup C \cup X_{2}$ and $S_{4}=X_{2}\cup C \cup X_{u}^1$. Since $V_{1}\neq \emptyset$  ($u\in V_{1}$) and $\emptyset \neq \overline{V(O)\cup X} \subseteq \overline{ V_1\cup S_1}$, $S_{1}$ is a vertex cut of $G$. Hence $|S_{1}|=x_{1}+c+y_{1}\geq 3$ (as $G$ is 3-connected). The next claim shows that $|S_{1}|$ is at least 5.
\begin{figure}[htbp]
\begin{center}
\includegraphics[totalheight=5 cm]{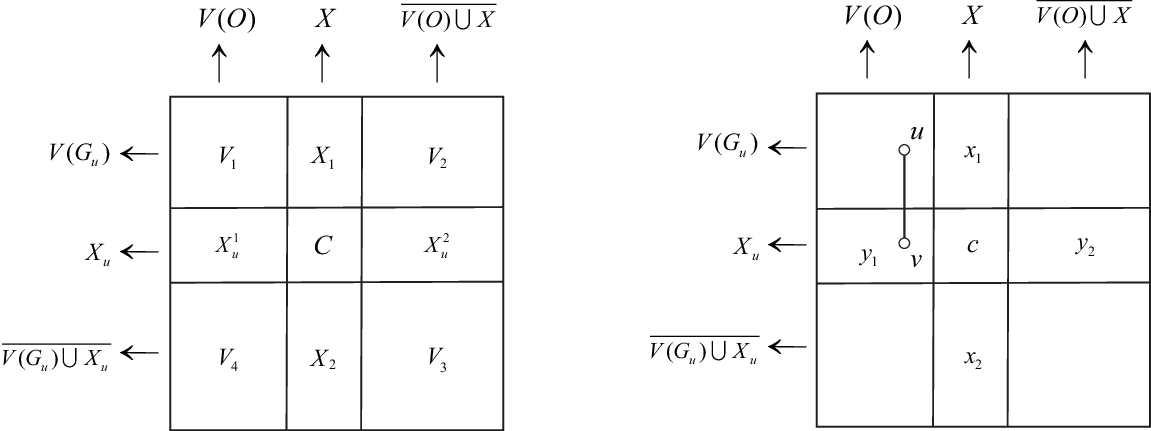}
\caption{\label{fig3}\small{Illustration for a  partition of $V(G)$ (the rectangles in the figure mean the vertex sets).}}
\end{center}
\end{figure}

\vspace{.2cm}

{\bf{Claim 1.}} $|S_{1}|=x_{1}+c+y_{1}\geq 5$.

\vspace{.2cm}

\emph{Proof.} Suppose, to the contrary, that $|S_{1}|\leq 4$. Then $|S_{1}|=3$ or 4. If $|S_{1}|=3$, then $|V_{1}|$ is even by Lemma \ref{3cuteven}, as $S_{1}$ is a vertex cut. 
Consequently, $S_{1}\cup \{u\}$ is a 4-vertex cut of $G$  separating $G[V_{1}\setminus \{u\}]$. As $|V_{1}\setminus \{u\}|$ is odd, $G[V_{1}\setminus \{u\}]$ contains an odd component  (maybe itself if it is connected), say $H$. Then $S_{1}\cup \{u\}$ separates $H$. As $ V(H)\subseteq V(O)\setminus \{u, v\}$ and $\{u,v\} \subseteq V(O)$,  $|V(H)|<|V(O)|$ holds. That is to say, we obtain a  4-vertex cut  separating an  odd component with smaller order than $O$, which contradicts the choice of $X$.  If $|S_{1}|=4$, then by the choice of $X$  and $|V_{1}|<|V(O)|$, $|V_{1}|$ must be even. In such case, as $X_{u}$ is a 4-vertex cut satisfying Property $P$, we have $ E(V(G_u), \{v\})=\{uv\}$, which implies that  $(S_{1}\cup\{u\})\setminus\{v\}$ is a 4-vertex cut separating $G[V_1\setminus\{u\}]$. Noting that $|V_1\setminus\{u\}|$ is odd and $ V_1\setminus\{u\} \subsetneq V(O)$,
$G[V_1\setminus\{u\}]$ contains an odd component of  $G-((S_{1}\cup \{u\})\setminus \{v\})$ which is of smaller order than $O$, contradicting the choice of $X$ again. Claim 1 is proved.

\vspace{.2cm}
The following claim can be easily obtained by the 3-connectedness of $G$, which  will be used frequently.

\vspace{.2cm}
{\bf{Claim 2.}} For $i=2,3,4$, if $|S_{i}|\leq 2$, then $V_{i}$ is an empty set.

\vspace{.2cm}

We now show that $|V_{3}|$ must be even.

\vspace{.2cm}
{\bf{Claim 3.}} $|V_{3}|$ is even.

\vspace{.2cm}

\emph{Proof.}  Since  $|S_{1}|=x_{1}+y_{1}+c\geq 5$ by Claim 1, $|S_{3}|=x_{2}+y_{2}+c\leq 3$ by Equation  (\ref{3.1}).
If $V_{3}$ is  an empty set, then, obviously, $|V_{3}|$ is even. So we assume $V_{3}\neq \emptyset$. Then $|S_{3}|=3$ by Claim 2.
 By Lemma \ref{3cuteven}, we have  $|V_{3}|$ is even as $S_{3}$ is a vertex cut. So the claim follows.

\vspace{.2cm}
{\bf{Claim 4.}} $X_{2}\neq \emptyset$. That is, $x_{2}=|X_{2}| \neq 0$.

\vspace{.2cm}

\emph{Proof.} Suppose, to the contrary, that $X_{2}=\emptyset$. By Claim 3, $|V_{3}|$ is even.  Recalling that $|\overline{V(G_{u})\cup X_{u}}|$ is even, $\overline{V(G_{u})\cup X_{u}}=V_{3}\cup X_{2} \cup V_{4}$ and  $|V_{3}|$ is even, we have that $|V_{4}|$ is even. Consequently, $|V_{4}\cup \{v\}|$ is odd.  Since $E(V(G_{u}), \{v\})=\{uv\}$, we have that $(X_{u}\cup \{u\})\setminus \{v\}$ is a 4-vertex cut (as $d_G(u)>4$, $|V(G_{u})|>1$), and separates an odd component contained in $G[V_{4}\cup \{v\}]$ (maybe itself) which is of smaller order than $O$ (note that ($V_{4}\cup \{v\}) \subsetneq V(O)$), contradicting the choice of $X$. The proof is complete.

\vspace{.2cm}

In the sequel, we will find the contradiction by distinguishing the following cases according to the value of $c$.

\vspace{.2cm}
{\bf{Case 1.} $c=0$}.
\vspace{.2cm}

In this case, we have $x_{1}+y_{1}\geq 5$ by Claim 1 and further $x_{2}+y_{2}\leq 3$ by Equation  (\ref{3.1}). By Claims 3 and 4,  $|V_{3}|$ is even and $|X_{2}|=x_{2}\geq 1$. Therefore, $1\leq x_{2}\leq x_{2}+y_{2}\leq 3$.

If $x_{2}=1$, then $y_{2}\leq 2$. We assume that $y_{2}\leq 1$ firstly.  Then $|S_{3}|=x_{2}+y_{2}\leq 2$. By Claim 2, $V_{3}$ is an empty set. Then $V_{4}\cup X_{2}=\overline{V(G_{u})\cup X_{u}}$. Therefore,  $(X_{u}\cup \{u\})\setminus \{v\}$ is a 4-vertex cut separating an odd component $H$ in $G[V_{4}\cup X_{2} \cup \{v\}]$ (maybe itself).  As $c=0$, we have  $y_{1}+y_{2}=4$. It follows that $y_{1}\geq 3$. Hence  $|X_{2}|=1<y_1= |X_{u}^1| $. Then $|V(H)|\leq |V_{4}\cup X_{2}\cup \{v\}|= |V_{4}|+ |X_{2}|+1 < |V_{4}|+ |X_{u}^1|+|V_1|=|V(O)|$,
 contradicting the choice of $X$.
%
Now we consider the case that $y_{2}=2$. Then  $y_1=2$. So $|S_{3}|=|S_{4}|=3$. If $V_{3}=\emptyset$ then $|V_{3}|$ is even; if $V_{3}\neq \emptyset$, then $|V_{3}|$ is also  even  by Lemma \ref{3cuteven} as  $S_{3}$ is a  vertex cut. Similarly, $V_{4}$ is of even size.
 Consequently, $|\overline{V(G_{u})\cup X_{u}}|=|V_{3}\cup V_{4}\cup X_{2}|=|V_{3}|+|V_{4}|+1$ is odd, a contradiction.

If $x_{2}=2$, then $x_{1}=2$ and $y_{2}\leq 1$.  If $y_{2}=0$,  then $y_1=4$ and $|S_{3}|=2$.
By Claim 2, $V_{3}$ is an empty set,   and further $(X_{u}\cup \{u\} )\setminus \{v\}$ is a desired 4-vertex cut, contradicting the choice of $X$ (similar to the case   that $x_{2}=1$ and $y_{2}\leq 1$).  If $y_{2}=1$, then $|S_{2}|=|S_{3}|=3$.
If $V_{2}\neq \emptyset$, then $S_{2}$ is a  vertex cut, implying that $|V_{2}|$ is  even by Lemma \ref{3cuteven}.
If $V_{2}=\emptyset$ then $|V_{2}|$ is also even. Similarly, $V_{3}$ is of even size.
It implies that  $|\overline{V(O)\cup X}|=|V_{2}|+|V_{3}|+y_{2}=|V_{2}|+|V_{3}|+1$ is odd, a contradiction.

If $x_{2}=3$, then  $x_{1}=1$, $y_{2}=0$ and $y_{1}=4$.  In this case, we have that $V_{2}$ is an empty set. So
 $ |V(G_{u})|=|V_1|+|X_1|=|V_1|+1<|V_1|+|X_u^1|+|V_4|=|V(O)| $.
But $X_{u}$ is a
 4-vertex cut separating the odd component  $G_{u}$,
  contradicting the choice of $X$.

\vspace{.2cm}
{\bf{Case 2.} $c=1$}.

\vspace{.2cm}

In this case, we have $x_{1}+y_{1}\geq 4$ by Claim 1 and further $x_{2}+y_{2}\leq 2$ by Equation  (\ref{3.1}). We also have $|X_{2}|=x_{2}\geq 1$ by Claim 4. Therefore, $1\leq x_{2}\leq x_{2}+y_{2}\leq 2$.


If $x_{2}=1$, then $y_{2}\leq 1$.  If  $y_{2}=0$, then $y_{1}= 3$ and $|S_{3}|=x_{2}+c+y_{2}= 2$, which implies  that $|V_{3}|=0$ by Claim 2. Then $V_{4}\cup X_{2}=\overline{V(G_{u})\cup X_{u}}$ is of even size and  $(X_{u}\cup \{u\}) \setminus \{v\}$ is a  4-vertex cut separating an odd component $H$ in $G[V_{4}\cup X_{2} \cup \{v\}]$ (maybe itself). Since
 $|V(H)|\leq |V_{4}\cup X_{2}\cup \{v\}|=|V_{4}|+2$ and $|V(O)|=|V_{4}\cup X_{u}^{1}\cup V_{1}|=|V_{4}|+y_{1}+|V_{1}|\geq |V_{4}|+4$, we have $|V(H)|<|V(O)|$. Then $(X_{u}\cup \{u\}) \setminus \{v\}$ is a  4-vertex cut contradicting the choice of $X$.  Now we are left to the case   that $y_{2}=1$.  By Claim 3, $|V_{3}|$ is even. It implies that $|V_{4}|$  is odd (recall that $|V_4\cup V_3\cup X_2|$ is even). Note that $|S_{4}|=|X_u^1\cup C\cup X_2|=4$. Then $S_{4}$ is a desired 4-vertex cut contradicting the choice of $X$.

If $x_{2}=2$, then  $y_{2}=0$. In this case, we can immediately deduce that $|S_{2}|=2$ and  $V_{2}$ is an empty set by Claim 2. Noting that $x_1=1$ and $y_1= 3$,  we have $|V(G_{u})|=|V_{1}|+|X_{1}|=|V_{1}|+1$ and $|V(O)|=|V_{1}|+|X_{u}^{1}|+|V_{4}|\geq |V_{1}|+3$. It follows that $|V(G_{u})|<|V(O)|$ and  $X_{u}$ is a desired 4-vertex cut contradicting the choice of $X$.

\vspace{.2cm}
{\bf{Case 3.} $c=3$}.
\vspace{.2cm}

By Claim 1, $x_{1}+y_{1}\geq 2$.  By Equation  (\ref{3.1}), $x_{2}+y_{2}\leq 0$. Then $x_{2}=y_{2}=0$, which contradicts that $x_{2}\neq 0$ by Claim 4.


\vspace{.2cm}

{\bf{Case 4.} $c=2.$}

\vspace{.2cm}

By Claim 4, $|X_{2}|=x_{2}\geq 1$. By Claim 1, we have $x_{1}+y_{1}\geq 3$.  Consequently, $x_{2}+y_{2}\leq 1$ by Equation  (\ref{3.1}).  Therefore, $x_{2}=1$, $y_{2}=0$, and further $x_{1}=1, y_{1}=2$.

We claim that $V_{2}\neq \emptyset$. Otherwise, $V_{2}= \emptyset$. Then
$ |V(G_u)|<|V(O)| $.
So  $X_{u}$ is a desired 4-vertex cut
separating the odd component $G_u$  of order smaller than $O$,
 contradicting the choice of $X$.
 Let $B_{2}$ be any component of $G[V_{2}]$. Since $|S_{2}|=x_{1}+y_{2}+c=3$,  $N(B_{2})=S_{2}$ as $G$ is 3-connected.  Moreover,  $|V_{2}|$ is even by Lemma \ref{3cuteven} (as $S_{2}$ is a   vertex cut). Further, since $|V(G_{u})|=|V_{1}|+|V_{2}|+1$ is odd, $|V_{1}|$ is even. Hence $|V_{1}\setminus \{u\}|$ is odd and $G[V_{1}\setminus \{u\}]$ contains at least one odd component. Let $B_{1}$ be  such an odd component. Then $N(B_{1})\subseteq  S_{1}\cup \{ u\}$. As $S_{1}$ is a vertex cut, $N(B_{1})$ is a vertex cut. It implies that $n(B_{1})\geq 3$ by the 3-connectedness of $G$. Besides, since $|V(B_{1})|$ is odd,  $n(B_{1})\neq 3$ by Lemma \ref{3cuteven}. In a  word, $n(B_{1})\geq 4$.
 As $V(B_{1}) \subsetneq V(O)$, $n(B_{1})\neq 4$ by the choice of $X$. So $n(B_{1})\geq 5$. Recalling that $ E(V(G_u), \{v\})=\{uv\}$,  $N(B_{1})=(S_{1}\cup\{u\})\setminus\{v\}$ and $n(B_{1})=5$. Now we turn our attention to $G[V_4]$.  Since $|V(O)|$ is odd, $|V(O)|=|V_{1}|+2+|V_{4}|$ and $|V_{1}|$ is even, one can conclude that $|V_{4}|$ is odd. Let $B_{4}$ be an odd component of $G[V_4]$. By a complete similar argument as we have done for $B_{1}$, we obtain that $|N(B_{4})|=5$ and $N(B_{4})=S_{4}$. We also claim that $V_{3}\neq \emptyset$. Otherwise, $(X_{u}\cup \{u\})\setminus \{v\}$ is a 4-vertex cut separating $G[V_4\cup X_2\cup \{v\}]$.
 As $|V_{1}|$ is even and not zero, $|V_4\cup X_2\cup \{v\}| =|V_4|+2<|V_4\cup X_u^1|+|V_1|=|V(O)|$ (note that $|X_{u}^{1}|=2$).
  Thus the 4-vertex cut $(X_{u}\cup \{u\})\setminus \{v\}$ separates a smaller odd component than $O$,
  contradicting the choice of $X$. Similar to the components in $G[V_{2}]$, for any component  $B_{3}$  in $G[V_{3}]$, we have $N(B_{3})=S_{3}$.

Let $X_{1}=\{a\}, C=\{b,c\}$ and $X_{2}=\{d\}$. If $ua\in E(G)$,
let $U_{1}=B_{1}$, $U_{2}=B_{2}\cup \{a\}$,
 $U_{3}=B_{4}\cup \{v\}$. Then $G[U_i]$ is connected for $i=1,2,3$. Moreover, for every $x\in \{u,b,c\}$, $N(x)\cap U_i\neq \emptyset$. Therefore, The subgraph induced by $U_{1}\cup U_{2}\cup U_{3}\cup \{ u,b,c\} $  contains a $K_{3,3}$ minor of $G$, contradicting that $G$ is planar by Theorem \ref{planar} (see Figure \ref{fig4}).

\begin{figure}[htbp]
\begin{center}
\includegraphics[totalheight=5 cm]{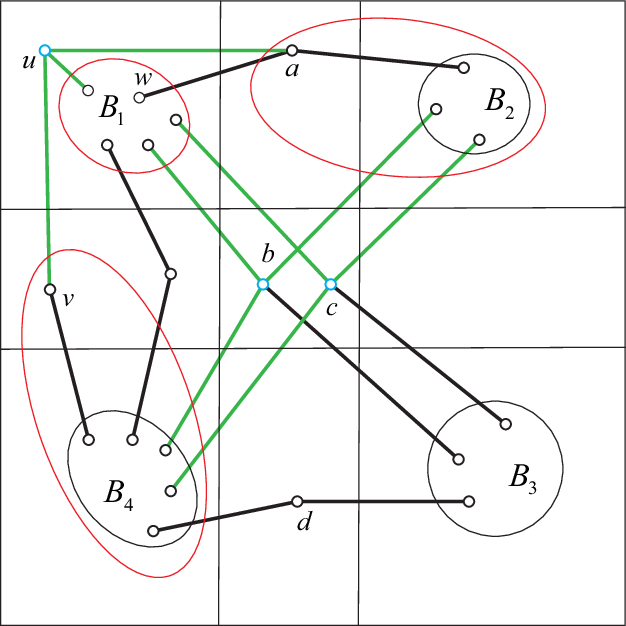}
\caption{\label{fig4}\small{The illustration for a $K_{3,3}$ minor.}}
\end{center}
\end{figure}

Next we assume that $ua\notin E(G)$. By the above argument, we know $N(B_{1})=(S_{1}\cup \{u\})\setminus \{v\}$. So there exists a vertex $w\in V(B_1)$ satisfying  $wa\in E(G)$;  see Figure \ref{fig4}.  As $w\in V(O)$, $O$ is an odd component of $G-X$ and $|V(O)|\geq 3$, there exists a vertex $z$ in $V(O)$ such that $wz\in E(O)$. Obviously, $z\neq v$ as $wv\notin E(G)$ by $E(V(G_u), \{v\})=\{uv\}$ and $w\in V(G_{u})$.  By Lemma \ref{propertyP}, there is a vertex cut $X_{w}$ satisfying Property $P$.  We do the same argument on the edge $wz$  as we have done on $uv$, using similar notation: $V_{1}'=V(G_{w})\cap V(O)$, $V_{2}'=V(G_{w})\cap \overline{V(O)\cup X}$, $V_{3}'=\overline{V(G_{w}) \cup X_{w}} \cap \overline{V(O)\cup X}$, $V_{4}'=\overline{V(G_{w}) \cup X_{w}} \cap V(O)$, $X_{1}'=X\cap V(G_{w})$, $X_{2}'=X\cap \overline{V(G_{w}) \cup X_{w}}$, $X_{w}^{1}=X_{w}\cap V(O)$, $X_{w}^{2}=X_{w}\cap \overline{V(O)\cup X}$, $C'=X\cap X_{w}$; and $S_{i}'~(i=1,2,3,4)$ is defined similarly, too.  Then ultimately,  we only need to consider the situation when all  the following statements hold.

 (i) $|C'|=2$, $|X_{1}'|=|X_{2}'|=1$,  $|X_{w}^{1}|=2$ and $ |X_w^2|=0$;

(ii) $w\in V_{1}'$, $|V_{1}'\setminus \{w\}|$ is odd and for any odd component $B_{1}'$ in $G[V_{1}'\setminus \{w\}]$, $N(B_{1}')=(S_{1}'\cup\{w\})\setminus\{z\}$;

(iii) $V_{2}'\neq \emptyset$ and for any component $B_{2}'$ in $G[V_{2}']$, $N(B_{2}')=S_{2}'$;

(iv) $V_{3}'\neq \emptyset$ and for any component $B_{3}'$ in $G[V_{3}']$, $N(B_{3}')=S_{3}'$;

(v) $|V_{4}'|$ is odd and for any odd component $B_{4}'$ in $G[V_{4}']$, $N(B_{4}')=S_{4}'$.

As $X$ is fixed, we have $\overline{V(O)\cup X}=V_{2}\cup V_{3}=V_{2}'\cup V_{3}'$.
 Recall that   $N(V_{2})=S_{2}=C\cup X_1$, $N(V_{3})= S_{3}= C\cup X_2$,
  $N(V_{2})\cap X_2= \emptyset$ and $N(V_{3})\cap X_1= \emptyset$.
  Similarly,   $N(V'_{2})=  S_{2}'= C'\cup X'_1$, $N(V'_{3})=  S_{3}'=C'\cup X'_2$,
  $N(V'_{2})\cap X'_2= \emptyset$ and $N(V'_{3})\cap X'_1= \emptyset$. So every vertex in $C$ is adjacent to some vertex in each component of $G[ V_{2}\cup V_{3}]$. Let $ T=\{t\in X: \mbox{$t$ is adjacent to some vertex in each component of $G[ V_{2}\cup V_{3}]$}\}$. As $N(V_{2})\cap X_2= \emptyset$ and $N(V_{3})\cap X_1= \emptyset$,  $T=C$.
   Similarly, $T=C'$
  (Noting the components of $G[ V_{2}\cup V_{3}]$ are also the ones of $G[ V'_{2}\cup V'_{3}]$ as $X_{u}^2=X_{w}^{2}=\emptyset$).
 Therefore, $C=C'=\{b,c\}$. Besides, because $wa\in E(G)$,  $w\in V_{1}\cap V_{1}'$ and $N(V_{1})\cap X_{2}=\emptyset$,  we have $X_{1}'=X_{1}=\{a\}$. Since $wa\in E(G)$, similar to the case that $ua\in E(G)$, we can show that $G$  contains  $K_{3,3}$ as a minor, a contradiction. In a word, if $\delta(G)\geq 5$, then we will obtain a contradiction, that is, $\delta(G)\leq 4$. The result that $\delta(G)=4$ follows.
\end{proof}

Now we are ready to prove our main result.

\vspace{.2cm}

\noindent {\bf Proof of Theorem \ref{minimumdegree}.}   As we know, if $G$ is a minimal $k$-fc graph,  then $\delta(G)\geq k+1$.  Since  $G$ is a planar graph, $\delta(G)\leq 5$ by Lemma \ref{degree5}. It follows that if $G$ is a  minimal $k$-fc planar graph, then $k\leq 4$ and the theorem holds for $k=4$ immediately. For $k=1,2$, Conjecture \ref{Shi} has been confirmed (see Introduction of this paper), and surely holds for planar graphs. By Theorem \ref{3fc}, Theorem \ref{minimumdegree} holds for $k=3$. The proof is complete. \hfill$\Box$

\end{document}